\documentclass{amsart}

\usepackage{amssymb}
\usepackage{amsmath}
\usepackage{amsthm}
\usepackage{nicefrac}


\makeatletter
\def\@begintheorem#1#2[#3]{%
    \def\naam{#1}
  \deferred@thm@head{\the\thm@headfont \thm@indent
    \@ifempty{#1}{\let\thmname\@gobble}{\let\thmname\@iden}%
    \@ifempty{#2}{\let\thmnumber\@gobble}{\let\thmnumber\@iden}%
    \@ifempty{#3}{\let\thmnote\@gobble}{\let\thmnote\@iden}%
    \thm@swap\swappedhead\thmhead{#1}{#2}{#3}%
    \the\thm@headpunct
    \thmheadnl 
    \hskip\thm@headsep
  }%
  \ignorespaces}
\makeatother


\def\voegToe#1#2#3{\immediate\write1{\string\newlabel{#1}{{#2}{#3}}}}
\def\thlabel#1{\voegToe{#1}{\naam\noexpand~\thetheorem}{\thepage}}


\newtheorem{theorem}{Theorem}[section]
\newtheorem{proposition}[theorem]{Proposition}
\newtheorem{lemma}[theorem]{Lemma}
\newtheorem{corollary}[theorem]{Corollary}
\newtheorem{question}[theorem]{Question}

\theoremstyle{remark}

\theoremstyle{remark}

\newtheorem*{claim2}{\sc Claim}
\newenvironment{claim}{\begin{claim2}\rm}{\end{claim2}\rm}

\newenvironment{pfclaim}{\begin{trivlist}\item[]{\sc Proof of
Claim.}}{\mbox{~}\hfill
{\mbox{$\blacktriangleleft$}}\end{trivlist}}

\makeatletter

\makeatother

\newcommand{\Star}[2]{{\rm St}(#1,#2)}
\newcommand{\st}[3][.]{\if#1.{\mbox{St}(#2,\!~#3)}\else{\mbox{St}^{#1}(#2,\!~#3)}\fi}

\title{Weak extent, submetrizability and diagonal degrees}

\author{D. Basile}
\address{Universit\`a degli Studi di Catania\\
                Dipartimento di Matematica e Informatica\\
                Viale Andrea Doria 6\\
                95125 Catania\\
                Italy}
\email{basile@dmi.unict.it}

\author{A. Bella}
\address{Universit\`a degli Studi di Catania\\
                Dipartimento di Matematica e Informatica\\
                Viale Andrea Doria 6\\
                95125 Catania\\
                Italy}
\email{bella@dmi.unict.it}

\author{G. J. Ridderbos}
\address{Faculty of Electrical Engineering\\
         Mathematics and Computer Science\\
         TU Delft\\
         Postbus 5031\\
         2600~GA {} Delft\\
         the Netherlands}
\email{G.F.Ridderbos@tudelft.nl}
\urladdr{http://aw.twi.tudelft.nl/~ridderbos}

\newcommand{\sier}[1]{\mathcal{#1}}
\newcommand{\closure}[1]{\overline{#1}}
\newcommand{\pichar}[1]{\pi\chi(#1)}

\begin{document}

\begin{abstract}
We show that if $X$ has a zero-set diagonal and $X^2$ has
countable weak extent, then $X$ is submetrizable. This
generalizes earlier results from Martin and Buzyakova.
Furthermore we show that if $X$ has a regular $G_\delta$-diagonal
and $X^2$ has countable weak extent, then $X$ condenses onto a
second countable Hausdorff space. We also prove several
cardinality bounds involving various types of diagonal degree.
\end{abstract}

\keywords{Submetrizable spaces, weak extent, regular $G_\delta$-diagonal, rank $n$-diagonal, weak Lindel\"of number.}
\subjclass[2000]{54A25, 54C10, 54D20, 54E99.}
\date{\today}

\maketitle

\section{Introduction}

A space is called submetrizable if it admits a coarser metrizable
topology. The diagonal of $X^2$, denoted by $\Delta_X$, is the
set $\{(x,x) : x\in X\}$. A space $X$ is said to have a zero-set
diagonal if there is a continuous function $f:X^2\to [0,1]$ such
that $\Delta_X = f^{-1}(0)$ and $X$ is said to have a regular
$G_\delta$-diagonal if $\Delta_X$ is a regular $G_\delta$-subset
of $X$, i.e. it is the intersection of countably many closed
neighbourhoods.

It is well-known that every submetrizable space has a zero-set diagonal, but the converse is false in general (see the example constructed in \cite{reed} and the remarks on it made in \cite[Example 2.17]{arhanbuzya}). This suggests to find conditions for a space with a zero-set diagonal to be submetrizable.

For example, in \cite{mar75} H.W. Martin proved that separable spaces having a
zero-set diagonal are submetrizable. In another direction, in
\cite{buz05} R.Z. Buzyakova showed that if $X$ has a zero-set
diagonal and $X^2$ has countable extent then $X$ is
submetrizable. Separability and countable extent are independent
properties, but they have a quite natural common weakening, namely countable weak extent. 
In the first part of our paper, we give a simultaneous
generalization of both the previous  results by showing that
spaces
having a zero-set diagonal and whose square has countable weak
extent are submetrizable.

Buzyakova also proved (see \cite[Theorem 2.4 \& 2.5]{buz05})
that if $X$ has a regular $G_\delta$-diagonal and either it is
separable or $X^2$ has countable extent, then $X$ condenses onto
a second-countable Hausdorff space. Again, we give a simultaneous
generalization of both these results by showing that if $X^2$ has
countable weak extent and a regular $G_\delta$-diagonal, then $X$
condenses onto a second-countable Hausdorff space.

In the second part of the paper we will study cardinality
bounds on a space $X$ according to the specific  way its 
diagonal is embedded in $X^2$.

\section{Notation and terminology}

For all undefined notions we refer to \cite{engelking}.

Recall that $X$ condenses onto $Y$ if there is a continuous
bijection from $X$ onto $Y$. So a space is submetrizable if and
only if it condenses onto a metrizable space. The extent of a
space $X$, denoted by $e(X)$, is the supremum of the
cardinalities of closed and discrete subsets of $X$. The weak
extent of a space $X$, denoted by $we(X)$, is the least cardinal
number $\kappa$ such that for every open cover $\mathcal{U}$ of
$X$ there is a subset $A$ of $X$ of cardinality no greater than
$\kappa$ such that $\st{A}{\mathcal{U}}=X$. It is clear that
$we(X)\leq d(X)$ and $we(X)\leq e(X)$. Note that spaces with
countable weak extent are called star countable by several
authors (see, for instance \cite{alas11}). For a space $X$ the
weak-Lindel\"of number of $X$, denoted by $wL(X)$, is the least
cardinal $\kappa$ such that every open cover of $X$ has a
subfamily of cardinality no greater than $\kappa$ whose union is
dense in $X$.

Whenever $\sier{B}$ is a collection of subsets of $X$ and
$A\subseteq X$, the star at $A$ with respect to $\sier{B}$,
denoted by $\st{A}{\sier{B}}$, is defined by the formula
$$
\st{A}{\sier{B}} = \bigcup\{ B\in\sier{B} : A\cap
B\not=\emptyset\}.
$$
If we let $\st[0]{A}{\sier{B}} = A$ then, for $n\in\omega$, the
$n$-star around $A$ is defined by induction:
$$
\st[n+1]{A}{\sier{B}} = \st{\st[n]{A}{\sier{B}}}{\sier{B}}.
$$
Note that $\st[1]{A}{\sier{B}}=\st{A}{\sier{B}}$. If $A=\{a\}$ we
write $\st[n]{a}{\sier{B}}$ instead of $\st[n]{A}{\sier{B}}$.

If $n\in\omega$, and $\kappa$ is an infinite cardinal, we say
that a space $X$ has a rank $n$ $G_\kappa$-diagonal (a strong
rank $n$ $G_\kappa$-diagonal) if there is a sequence
$\{\mathcal{U}_\alpha : \alpha < \kappa\}$ of open covers of $X$
such that for all $x\not=y$, there is some $\alpha < \kappa$ such
that $y\not\in St^{n}(x,\mathcal{U}_\alpha)$
($y\not\in\overline{St^{n}(x,\mathcal{U}_\alpha})$). When
$\kappa=\omega$, we will simply write rank $n$-diagonal.
We will denote the minimal cardinal $\kappa$ such that $X$ has a
rank $n$ $G_\kappa$-diagonal or a strong rank $n$
$G_\kappa$-diagonal by $\Delta_n(X)$ and $s\Delta_n(X)$,
respectively.  The formula 
$\Delta_n(X)\leq\min\{\Delta_{n+1}(X),s\Delta_n(X)\}$ is
obviously true. If $n=1$
we will omit the number $1$.

Recall that a space has a $G_\delta$-diagonal if and only if it
has a rank $1$-diagonal (this was proved by Ceder in \cite[Lemma
5.4]{ced61}). In analogy to Ceder's result, Zenor proved in
\cite[Theorem 1]{zen72} that a space $X$ has a regular
$G_\delta$-diagonal if and only if there is a sequence
$\{\mathcal{U}_n : n\in\omega\}$ of open covers of $X$ such that
for all $x\not=y$, there is a neighbourhood $U$ of $x$ and some
$n\in\omega$ such that $y\not\in\overline{St(U,\mathcal{U}_n)}$. 

In particular, if a space has a strong rank $2$-diagonal, then it
has a regular $G_\delta$-diagonal. We must say that at present we
do not know any example of spaces having a regular
$G_\delta$-diagonal that does not have a strong rank
$2$-diagonal. Even more intriguing is the relationship between
regular $G_\delta$-diagonal and rank $2$-diagonal. It is
well-known that there exists a space with a rank $2$-diagonal
that does not have a regular $G_\delta$-diagonal, namely the
Mrowka space $\Psi$ (see \cite{arhanbuzya}). This easily follows
from a result of McArthur (\cite{mcarthur}), stating that a
pseudocompact space with a regular $G_\delta$-diagonal is
metrizable. But the following question from A. Bella
(\cite{bella87}) is still open:

\begin{question}
Does any space with a regular $G_\delta$-diagonal have a rank $2$-diagonal?
\end{question}

A good reason  for asking such a question comes out from  a
comparison of the following two facts. In
\cite{bella87} Bella proved that a ccc space  with a rank
$2$-diagonal has cardinality not exceeding $2^\omega$.  
Much more recently and with a certain effort, in \cite{buz06} 
Buzyakova has shown that a ccc space with a regular
$G_\delta$-diagonal has again cardinality not exceeding
$2^\omega$. Therefore, a positive answer to the previous question
would imply a trivial proof of the latter
result from the former. 

\section{Zero-set diagonal vs  submetrizability}

The aim of this section is to provide a simultaneous
generalization of Martin and Buzyakova's results. The obvious way
to accomplish this is by using the weak extent. However, we
actually present a formally stronger result obtained by means of
an even weaker form of the weak extent of a square. 

The weak double extent of a space $X$, denoted by $wee(X)$, is
the smallest cardinal $\kappa $ such that whenever $\mathcal{U}$
is an open cover of $X^2$, there exists some
$A\subseteq X$ with $|A|\leq\kappa$ such that
$$
\Star{X\times A}{\mathcal{U}} = X^2.
$$

The following is obvious.

\begin{proposition}
For any space $X$, we have $we(X)\leq wee(X)\leq we(X^2)$.
\end{proposition}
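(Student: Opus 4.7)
The plan is a direct unwinding of definitions; there is no substantive obstacle, and the result really is ``obvious'' as the authors claim.

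For the inequality $we(X) \le wee(X)$, given an open cover $\mathcal{U}$ of $X$ I would lift it to the open cover $\mathcal{V} = \{U \times V : U, V \in \mathcal{U}\}$ of $X^2$, apply the definition of $wee(X)$ to obtain $A \subseteq X$ with $|A| \le wee(X)$ and $\Star{X \times A}{\mathcal{V}} = X^2$, and then read off a witness back in $X$. Concretely, given any $y \in X$, pick any $x \in X$ and find $U \times V \in \mathcal{V}$ with $(x,y) \in U \times V$ and $(U \times V) \cap (X \times A) \neq \emptyset$; the meeting condition forces $V \cap A \neq \emptyset$, and since $y \in V \in \mathcal{U}$, this gives $y \in \Star{A}{\mathcal{U}}$.

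For the inequality $wee(X) \le we(X^2)$, given an open cover $\mathcal{U}$ of $X^2$ I would apply the definition of $we(X^2)$ to obtain $B \subseteq X^2$ with $|B| \le we(X^2)$ and $\Star{B}{\mathcal{U}} = X^2$, and then simply project onto the second coordinate. Setting $A = \pi_2(B) \subseteq X$, which satisfies $|A| \le |B|$, for any $(x, y) \in X^2$ a witness $U \in \mathcal{U}$ with $(x,y) \in U$ and $U \cap B \neq \emptyset$ produces a point $(b_1, b_2) \in B \cap U$; since $b_2 \in A$ we have $(b_1, b_2) \in (X \times A) \cap U$, showing $(x, y) \in \Star{X \times A}{\mathcal{U}}$.

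The only point that rewards a moment of care is the choice of the lifted cover in the first inequality: the naive alternative $\{U \times U : U \in \mathcal{U}\}$ is not an open cover of $X^2$ in general, so one really does want the full product family $\{U \times V : U, V \in \mathcal{U}\}$. Everything else is a mechanical comparison of the three definitions.
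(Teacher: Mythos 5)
Your proof is correct, and since the paper gives no argument (it simply declares the proposition obvious), your direct unwinding of the three definitions is exactly the intended reasoning. Both inequalities are verified soundly, and your remark that one must use the full product family $\{U\times V : U,V\in\mathcal{U}\}$ rather than $\{U\times U : U\in\mathcal{U}\}$ is a worthwhile point of care.
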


By using Example 3.3.4 in \cite{vDreed91}, we are going to
provide a space $X$ such that $we(X)< wee(X)$. Let $\Psi$ be the
Mrowka space $\mathcal A\cup\omega$, where the cardinality of
$\mathcal A$ is $\mathfrak{c}$, and let $Y$ be the one-point
compactification of a
discrete space $D$ of cardinality $\mathfrak{c}$. The space
$X=\Psi\oplus Y$ is the topological sum of a separable space and
a compact space and so we have $we(X)=\omega$. Write $\mathcal
A=\{A_\alpha:\alpha<\mathfrak{c}\}$ and
$D=\{d_\alpha:\alpha<\mathfrak{c}\}$. Let 
\begin{align*}
&U_1=\{\Psi\times \{d_\alpha \}:\alpha <\mathfrak{c}\},\\
&U_2=\{(\{A_\alpha \}\cup A_\alpha )\times (Y\setminus\{d_\alpha \}):\alpha<\mathfrak{c}\}, \\
&U_3=\{\{n\}\times Y:n<\omega\},
\end{align*}

\noindent and finally $\mathcal U=\mathcal
U_1\cup \mathcal U_2 \cup \mathcal U_3 \cup \{Y\times Y\} \cup
\{\Psi\times \Psi\} \cup \{Y\times\Psi\}$.
 
Of course the family
$\mathcal{U}$ is an open cover of $X^2$. Assume that there exists
a countable set $C\subseteq X$ such that $St(X\times C,\mathcal
U)=X^2$. This in turn
would imply the relation $St(\Psi\times (C\cap Y), \mathcal
U_1\cup
\mathcal U_2\cup \mathcal U_3))=\Psi\times Y$.  Since we have
$\Psi\times
Y\setminus (\bigcup \mathcal U_2\cup \bigcup \mathcal
U_3)\supseteq
\{(A_\alpha ,d_\alpha ):\alpha <\mathfrak{c}\}$,  it should be 
$\{(A_\alpha,d_\alpha):\alpha <\mathfrak{c}\}\subseteq
St(\Psi\times (C\cap Y), \mathcal U_1)$. But this  would imply
$D\subseteq C\cap Y$, which  is a contradiction.  This suffices
for the proof  that $wee(X)>\omega=we(X)$.

A further look shows that we actually have $wee(X)=\mathfrak c$.
By repeating the same construction, with the Katetov's extension
in place of $\Psi$ and  with $D$ a set of cardinality
$2^\mathfrak c$, we get a Hausdorff space $X$ such that
$we(X)=\omega$ and $wee(X)=2^\mathfrak c$.

Right now, we do not have a space $X$ for which
$wee(X)<we(X^2)$.

\begin{lemma}
If $wee(X)=\omega $ and  $F$ is a closed subset of $X^2$ and
$\mathcal{U}$ is a cover of $F$ by open subsets of $X^2$, then
there is a countable subset $A$ of $X$ such that
$$
F\subseteq\Star{X\times A}{\mathcal{U}}.
$$
\end{lemma}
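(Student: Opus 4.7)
The plan is to reduce the claim to the defining property of \emph{wee} by extending the partial cover $\mathcal{U}$ to an honest open cover of $X^2$ in the most natural way: by throwing in the open complement of $F$.

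First I would set $V = X^2 \setminus F$, which is open since $F$ is closed, and let $\mathcal{U}' = \mathcal{U} \cup \{V\}$. Because $\mathcal{U}$ already covers $F$ by open subsets of $X^2$, the enlarged family $\mathcal{U}'$ is an open cover of $X^2$. Since $wee(X) = \omega$, there is a countable $A \subseteq X$ with $\Star{X \times A}{\mathcal{U}'} = X^2$.

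Next I would verify that $A$ works for the original family: given $p \in F$, pick $W \in \mathcal{U}'$ with $p \in W$ and $W \cap (X \times A) \neq \emptyset$. The point is that $W$ cannot be the extra set $V$, because $p \in F$ forces $p \notin X^2 \setminus F = V$. Hence $W \in \mathcal{U}$, and therefore $p \in \Star{X \times A}{\mathcal{U}}$. This gives $F \subseteq \Star{X \times A}{\mathcal{U}}$, as required.

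There is essentially no obstacle here; the only thing to be careful about is the observation that elements of $F$ are automatically screened off from the auxiliary set $V$, which is exactly what makes the reduction clean. The lemma is really just the statement that the \emph{wee} property for open covers of $X^2$ transfers verbatim to relative open covers of closed subsets of $X^2$, through the standard trick of adjoining the complement.
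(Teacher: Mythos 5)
Your proof is correct; the paper states this lemma without proof, and your argument (adjoining $X^2\setminus F$ to form a genuine open cover of $X^2$, then noting that points of $F$ cannot be picked up by that extra set) is exactly the routine reduction the authors evidently had in mind. Nothing to add.
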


\begin{theorem}
If $X$ has a zero-set diagonal and $wee(X)=\omega$, then $X$ is
submetrizable.
\end{theorem}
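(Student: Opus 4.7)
The plan is to use the zero-set diagonal function $f\colon X^2\to[0,1]$ together with the hypothesis $wee(X)=\omega$ to produce a countable set $A\subseteq X$ such that the continuous functions $\{f(\cdot,c):c\in A\}$ separate the points of $X$. Once this is done, the diagonal map $\Phi\colon X\to[0,1]^A$ defined by $\Phi(x)(c)=f(x,c)$ is a continuous injection into the metrizable space $[0,1]^A$, witnessing that $X$ condenses onto a metrizable space, i.e.\ that $X$ is submetrizable.

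The candidates for $A$ will be obtained level by level. First, for each $n\in\omega$ and each $x\in X$, I would use continuity of $f$ at $(x,x)$ to fix an open neighbourhood $U_n(x)$ of $x$ with $U_n(x)\times U_n(x)\subseteq f^{-1}([0,1/n))$. Next, for each $n\geq 1$, consider the closed set $F_n:=f^{-1}([1/n,1])$, which is disjoint from $\Delta_X$. For each $(a,b)\in F_n$ I would use continuity of $f$ to pick an open rectangle $V_{a,b}\times W_{a,b}$ around $(a,b)$ on which $f>1/(2n)$, and I would additionally shrink $W_{a,b}$ to lie inside $U_{2n}(b)$; by the choice of the $U_n$'s this forces $f(W_{a,b}\times W_{a,b})\subseteq[0,1/(2n))$. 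The resulting family $\mathcal{V}_n$ is an open cover of the closed set $F_n\subseteq X^2$, so the previous Lemma applies and gives a countable $A_n\subseteq X$ with $F_n\subseteq\Star{X\times A_n}{\mathcal{V}_n}$. Set $A:=\bigcup_n A_n$.

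To verify separation, take $x\neq y$ and choose $n$ with $f(x,y)\geq 1/n$, so that $(x,y)\in F_n$. By the star condition there exists $V_{a,b}\times W_{a,b}\in\mathcal{V}_n$ containing $(x,y)$ and meeting $X\times A_n$, which yields a point $c\in W_{a,b}\cap A_n$. Both engineering constraints on the cover now come into play: from $(x,c)\in V_{a,b}\times W_{a,b}$ one reads off $f(x,c)>1/(2n)$, and from $(y,c)\in W_{a,b}\times W_{a,b}$ one reads off $f(y,c)<1/(2n)$. Thus $f(x,c)\neq f(y,c)$ with $c\in A$, and the separation property follows.

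The only substantive obstacle is designing the cover $\mathcal{V}_n$ so that a single witness $c\in A_n$ simultaneously certifies that $f(x,c)$ is bounded below and that $f(y,c)$ is bounded above, which is what makes $f(x,c)\neq f(y,c)$ force itself upon us. This double control is exactly why the rectangles must be built with the lower bound on the cross-product $V\times W$ and an upper bound on the square $W\times W$, the latter arranged by shrinking $W$ inside $U_{2n}(b)$. Once the cover is engineered correctly, the weak-double-extent hypothesis, applied through the Lemma, is precisely what is needed to extract the countable $A_n$; everything else is formal.
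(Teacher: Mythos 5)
Your proposal is correct and follows essentially the same route as the paper: the same closed sets $F_n=f^{-1}([\nicefrac{1}{n},1])$, the same cover by rectangles $V\times W$ with $f>\nicefrac{1}{2n}$ on $V\times W$ and $f<\nicefrac{1}{2n}$ on $W\times W$, the same appeal to the weak-double-extent lemma, and the same injection $x\mapsto (f(x,c))_{c\in A}$. The only cosmetic difference is that you build the rectangles pointwise via the auxiliary neighbourhoods $U_{2n}(b)$, whereas the paper defines the cover $\mathcal{W}_n$ by the two containment conditions directly and then checks it covers $C_n$.
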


\begin{proof}
Let $f:X^2\to [0,1]$ be such that $f^{-1}(0)=\Delta_X$. Next, for
$n\in\mathbb{N}$ we let
$C_n=f^{-1}([\nicefrac{1}{n},1])$. Of course $C_n$ is a closed
subset of $X^2$, and $X^2\setminus \Delta_X =
\bigcup_{n\in\mathbb{N}} C_n$.

For $n\in\mathbb{N}$, we let $\mathcal{W}_n$ be defined
by
$$
\mathcal{W}_n = \{ U\times V : U\times V\subseteq
f^{-1}((\nicefrac{1}{2n},1]),~ V\times V\subseteq
f^{-1}([0,\nicefrac{1}{2n}))~\&~ U, V \textnormal{open in}~X\}.
$$
Note that $\mathcal{W}_n$ is a cover of $C_n$ by open subsets of
$X^2$. To see this, fix $n\in\mathbb{N}$ and let $(x,y)\in C_n$. We have $f(x,y)\in (\nicefrac{1}{2n},1]$, and therefore there exist open subsets $U$ and $V$ of $X$ such that $(x,y)\in U\times V\subseteq f^{-1}((\nicefrac{1}{2n},1])$. Moreover, since $(y,y)\in V\times V$ and $f(y,y)=0$ we can shrink $V$ in such a way that $V\times V\subseteq
f^{-1}([0,\nicefrac{1}{2n}))$.

Since $wee(X)=\omega$, by the preceding lemma we may find
a countable subset $B_n$ of $X$ such that
$$
C_n\subseteq \Star{X\times B_n}{\mathcal{W}_n}.
$$
We now let $B=\bigcup_{n\in\mathbb{N}} B_n$, and we define
$F:X\to [0,1]^B$ by
$$
F(x)(b) = f(x,b).
$$
We will show that $F$ is an injection. Since $B$ is countable,
this will imply that $X$ is submetrizable. Pick $x,y\in X$ with
$x\not= y$. Then there is some $n\in\omega\setminus\{0\}$ with
$(x,y)\in C_n$. So we may find $b\in B_n$ and $U\times V\in
\mathcal{W}_n$ such that $(x,y)\in U\times V$ and $b\in V$. Then
$(x,b)\in U\times V$ and $(y,b)\in V\times V$. From the
definition of $\mathcal{W}_n$, it follows that
$$
f(y,b) < \nicefrac{1}{2n} < f(x,b),
$$
and therefore $F(x)\not= F(y)$. This completes the proof.
\end{proof}

The following is the announced generalization of  \cite[Theorem
1]{mar75} and \cite[Theorem 2.1]{buz05}.

\begin{corollary}
If $X^2$ has countable weak extent and a zero-set diagonal, then
$X$ is submetrizable.
\end{corollary}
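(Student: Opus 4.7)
The plan is to observe that this corollary is essentially an immediate consequence of the preceding theorem combined with the inequality established in the proposition. The hypothesis gives $we(X^2) = \omega$, and the proposition (asserting $we(X) \leq wee(X) \leq we(X^2)$ for any space $X$) then forces $wee(X) = \omega$. Since $X$ also has a zero-set diagonal by assumption, the hypotheses of the theorem are satisfied, so $X$ is submetrizable.

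There is really no main obstacle here: all the work has been done in proving the theorem, and the point of phrasing that result in terms of the weak double extent $wee$ rather than the weak extent of the square $we(X^2)$ is precisely to extract the corollary for free via the proposition. So the proof is a two-sentence citation: invoke the proposition to weaken the hypothesis on $X^2$ to a statement about $wee(X)$, and then invoke the theorem.

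If one wanted to avoid citing \emph{wee} altogether, the alternative plan would be to redo the proof of the theorem directly: for a zero-set function $f : X^2 \to [0,1]$ with $f^{-1}(0) = \Delta_X$, build the covers $\mathcal{W}_n$ of $C_n = f^{-1}([\nicefrac{1}{n}, 1])$ as in the theorem, enlarge each to an open cover of $X^2$ by adjoining a single open set containing $\Delta_X$ (for instance $f^{-1}([0,\nicefrac{1}{2n}))$), apply countable weak extent of $X^2$ to obtain a countable star-network $A_n \subseteq X^2$, and then project $A_n$ to a countable subset $B_n \subseteq X$ that still dominates $C_n$ in the sense $C_n \subseteq \Star{X \times B_n}{\mathcal{W}_n}$. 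The rest of the argument — defining $F : X \to [0,1]^B$ by $F(x)(b) = f(x,b)$ with $B = \bigcup_n B_n$ and checking injectivity — is identical. But this is strictly more work than simply quoting the theorem, so the short route is clearly preferable.
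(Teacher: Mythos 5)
Your proof is correct and is exactly the paper's intended argument: the inequality $wee(X)\leq we(X^2)$ from the proposition reduces the corollary to the theorem, which is why the theorem was stated in terms of $wee$ in the first place. The alternative direct route you sketch is unnecessary but would also work.
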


In \cite[Theorem 2.4 and 2.5]{buz05}, R.Z. Buzyakova proved that
if $X$ has a regular $G_\delta$-diagonal and either it is
separable or $X^2$ has countable extent, then $X$ condenses onto
a second-countable Hausdorff space.

Following the same technique of Buzyakova, we now generalize
those two results.

\begin{theorem}\thlabel{thm:Hausdorff-condensation}
Let  $wee(X)\leq\kappa$ and assume that $X$ has a regular
$G_\delta$-diagonal. Then $X$ condenses onto a Hausdorff space of
weight at most $\kappa$.
\end{theorem}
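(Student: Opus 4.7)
The strategy is to emulate the proof of the zero-set case above, with Zenor's characterization of the regular $G_\delta$-diagonal replacing the role of the zero-set function. Let $\{\mathcal{U}_n : n\in\omega\}$ be open covers of $X$ given by Zenor: for any $x\neq y$ there are $n\in\omega$ and an open $U\ni x$ with $y\notin\overline{\Star{U}{\mathcal{U}_n}}$. Put $O_n=\bigcup\{W\times W : W\in\mathcal{U}_n\}$; these are open neighbourhoods of $\Delta_X$ in $X^2$ with $\bigcap_n\overline{O_n}=\Delta_X$, and we may take them decreasing, so that the closed sets $F_n=X^2\setminus O_n$ form an increasing sequence with $\bigcup_n F_n=X^2\setminus\Delta_X$; the $F_n$ will play the role of the $C_n$ in the preceding proof.

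For each $n$, set
\[
\mathcal{W}_n=\{U\times V : U,V\in\tau(X),\ V\cap\overline{\Star{U}{\mathcal{U}_n}}=\emptyset\}.
\]
The family $\mathcal{W}_n$ covers $X^2\setminus\overline{O_n}$: for $(x,y)\notin\overline{O_n}$ one picks a product neighbourhood $U\times V$ of $(x,y)$ disjoint from $\overline{O_n}$; any $W\in\mathcal{U}_n$ meeting $V$ must then miss $U$ (else $W\times W\subseteq O_n$ would intersect $U\times V$), yielding $V\cap\Star{U}{\mathcal{U}_n}=\emptyset$ and, by a closure check, $V\cap\overline{\Star{U}{\mathcal{U}_n}}=\emptyset$. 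Hence $\bigcup_m\mathcal{W}_m$ is an open family covering $X^2\setminus\Delta_X$, and in particular each closed set $F_n$. The natural $\kappa$-version of the preceding Lemma then produces $A_n\subseteq X$ with $|A_n|\leq\kappa$ and $F_n\subseteq\Star{X\times A_n}{\bigcup_m\mathcal{W}_m}$; set $A=\bigcup_n A_n$, so $|A|\leq\kappa$.

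Let $\tau'$ be the topology on $X$ generated by the subbase
\[
\mathcal{S}=\{\Star{a}{\mathcal{U}_n},\ X\setminus\overline{\Star{a}{\mathcal{U}_n}} : a\in A,\ n\in\omega\}.
\]
Each subbasic set is $\tau$-open and $|\mathcal{S}|\leq\kappa$, so the identity $(X,\tau)\to(X,\tau')$ is a continuous bijection onto a space of weight at most $\kappa$. For the Hausdorffness of $\tau'$: given $x\neq y$, pick $n$ with $(x,y)\in F_n$ and extract $a\in A_n$ together with a product $U\times V\in\mathcal{W}_m$ (for some $m$) satisfying $x\in U$ and $a,y\in V$; any $W\in\mathcal{U}_m$ containing $a$ must miss $U$ (otherwise $a\in W\subseteq\overline{\Star{U}{\mathcal{U}_m}}$, contradicting $V\cap\overline{\Star{U}{\mathcal{U}_m}}=\emptyset$), so $\Star{a}{\mathcal{U}_m}\subseteq X\setminus U$ and therefore $x\notin\overline{\Star{a}{\mathcal{U}_m}}$, placing $x$ in the subbasic open set $X\setminus\overline{\Star{a}{\mathcal{U}_m}}$. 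The main obstacle is completing the separation on the $y$-side: since the Lemma as stated produces the witness $a$ only in the $V$-coordinate, additional care is required (for instance an analogous application of $wee(X)$ to the coordinate-flipped cover, or a restriction of the $V$-factor in $\mathcal{W}_n$ to canonical open sets $\Star{b}{\mathcal{U}_n}$ for $b$ in a pre-selected small set) to guarantee that $y$ lies inside a subbasic $\tau'$-open set disjoint from $X\setminus\overline{\Star{a}{\mathcal{U}_m}}$.
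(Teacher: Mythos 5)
Your proof is incomplete, and you have correctly located where: the separation of $y$ from $x$ in the new topology. The root cause is that your cover $\mathcal{W}_n=\{U\times V : V\cap\overline{\Star{U}{\mathcal{U}_n}}=\emptyset\}$ only records that $V$ is \emph{far} from $U$; it records nothing forcing $V$ to be \emph{small}. So when the weak double extent gives you a witness $a\in V\cap A_n$ with $y\in V$, you know $x\notin\overline{\Star{a}{\mathcal{U}_m}}$ (that half of your argument is fine), but you have no subbasic set around $a$ that is guaranteed to contain $y$ and to miss $X\setminus\overline{\Star{a}{\mathcal{U}_m}}$: the points $y$ and $a$ merely cohabit an arbitrary open $V$, which need not lie inside any member of $\mathcal{U}_m$, so $y\in\Star{a}{\mathcal{U}_m}$ can fail. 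The paper closes exactly this hole by building the smallness condition into the cover. Working directly with the sets from the definition of the regular $G_\delta$-diagonal, $\Delta_X=\bigcap_n U_n=\bigcap_n\overline{U}_n$, it takes
$$
\mathcal{U} = \{ U\times V : U\times V\subseteq X^2\setminus\overline{U}_m~\text{and}~V\times V\subseteq U_m~\text{for some}~m\},
$$
and the second condition is the one you are missing: it guarantees that the witness $b\in V$ and the point $q\in V$ satisfy $(q,b)\in U_m$, while $(p,b)\in U\times V\subseteq X^2\setminus\overline{U}_m$, so the two disjoint open sets $\{z:(z,b)\in U_m\}$ and $\{z:(z,b)\in X^2\setminus\overline{U}_m\}$ separate $q$ from $p$. (This is the same device as $V\times V\subseteq f^{-1}([0,\nicefrac{1}{2n}))$ in the zero-set theorem, which you set out to emulate but dropped.)

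The repairs you sketch would work --- in your Zenor-based setup it suffices to additionally require that $V$ be contained in some member of $\mathcal{U}_m$, so that $y,a\in V$ forces $y\in\Star{a}{\mathcal{U}_m}$, a subbasic set disjoint from $X\setminus\overline{\Star{a}{\mathcal{U}_m}}$, and the covering argument survives because $V$ can always be shrunk --- but as written the argument does not establish that the condensation is Hausdorff, which is the entire content of the theorem. Note also that passing through Zenor's characterization is an unnecessary detour here: the definition of regular $G_\delta$-diagonal already hands you the sets $U_n$, $\overline{U}_n$ that the separating family is built from.
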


\begin{proof}
Let $\Delta_X = \bigcap_{n<\omega} U_n = \bigcap_{n<\omega}
\overline{U}_n$, and let $C_n = X^2\setminus U_n$. We define a
family of open sets $\mathcal{U}$ as follows:
$$
\mathcal{U} = \{ U\times V : U\times V\subset
X\setminus\overline{U}_m,V\times V\subset U_m~\mbox{for
some}~m\in\omega~\&~U,V \textnormal{open in}~X\}.
$$
Note that since $\Delta_X = \bigcap_{m\in\omega}\overline{U}_m$,
it follows that $\mathcal{U}$ is an open cover of
$X^2\setminus\Delta_X$.

Since $wee(X)\leq\kappa$, we may find, for every $n\in\omega$, a
subset $B_n$ of $X$ of cardinality at most $\kappa$ such that
$$
C_n\subseteq \Star{X\times B_n}{\mathcal{U}}.
$$
If we let $B = \bigcup_{n\in\omega} B_n$, then $B$ is of
cardinality at most $\kappa$ and
$$
X^2\setminus\Delta_X \subseteq \Star{X\times B}{\mathcal{U}}.
$$
Now we let the family $\mathcal{B}$ consist of all open subsets
of $X$ of one of the following forms:
\begin{enumerate}
\item $\{ y : (y,b)\in U_n \}$ for some $b\in B$ and some
$n\in\omega$,
\item $\{ x : (x,b)\in X^2\setminus \overline{U}_n\}$ for some
$b\in B$ and some $n\in\omega$.
\end{enumerate}

Then since $|B|\leq\kappa$, we also have that
$|\mathcal{B}|\leq\kappa$. We will show that $\mathcal{B}$ is a
Hausdorff separating family (cf. \cite{buz05}).

So, pick $p\not= q$. Then there is some $b\in B$ and $U\times
V\in\mathcal{U}$ such that $b\in V$ and $(p,q)\in U\times V$.
Also, since $U\times V\in\mathcal{U}$, there is some $m\in\omega$
such that
$$
U\times V\subset X\setminus\overline{U}_m~\&~V\times V\subset
U_m.
$$
This means that $(p,b)\in U_m$ and $(q,b)\in
X\setminus\overline{U}_m$, and so we have
\begin{eqnarray*}
p &\in & \{ y : (y,b)\in U_m \} \\
q & \in & \{ x : (x,b)\in X^2\setminus \overline{U}_m\},
\end{eqnarray*}
and since these open sets are disjoint members of $\mathcal{B}$,
this shows that $\mathcal{B}$ is Hausdorff separating.
\end{proof}

\begin{corollary}
If $X^2$ has countable weak extent and a regular
$G_\delta$-diagonal, then $X$ condenses onto a second countable
Hausdorff space.
\end{corollary}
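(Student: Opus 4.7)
The corollary is essentially an immediate consequence of \thref{thm:Hausdorff-condensation} combined with the chain of inequalities relating the three cardinal functions involved, so the plan is very short.

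First, I would invoke the proposition stated near the beginning of Section~3, which gives $we(X)\leq wee(X)\leq we(X^2)$ for any space $X$. Under the hypothesis that $X^2$ has countable weak extent, this yields $wee(X)\leq we(X^2)=\omega$, so $wee(X)=\omega$.

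Second, I would apply \thref{thm:Hausdorff-condensation} with $\kappa=\omega$. The hypotheses are exactly met: $X$ has a regular $G_\delta$-diagonal by assumption, and $wee(X)\leq\omega$ was just established. The conclusion of the theorem is that $X$ condenses onto a Hausdorff space of weight at most $\omega$, which is the same as a second countable Hausdorff space.

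There is no real obstacle here — all the work has already been done in proving \thref{thm:Hausdorff-condensation}, and the corollary simply records the special case $\kappa=\omega$ after noting that the weak double extent of $X$ is bounded above by the weak extent of $X^2$. No additional diagonal-separation or covering argument is needed.
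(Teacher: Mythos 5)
Your proof is correct and is exactly the argument the paper intends: the corollary is the case $\kappa=\omega$ of the preceding theorem, using $wee(X)\leq we(X^2)=\omega$ from the proposition at the start of Section~3. Nothing further is needed.
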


\section{Some cardinal inequalities} 

In this section we prove  various cardinality bounds involving 
different types of diagonal degree. We start off by showing that
for Hausdorff spaces $X$ the inequalities
$\left|X\right|\leq2^{d(X)s\Delta(X)}$ and $\left|X\right|\leq
we(X)^{\Delta_2(X)}$ hold.

Next, we shall prove that if $X$ is either a Baire space with a
rank $2$-diagonal or a space with a rank $3$-diagonal, then its
cardinality is bounded by $wL(X)^\omega$. We do not know if the
same inequality is still true for spaces having a strong rank
$2$-diagonal. However, we can prove that, for such spaces, the
inequality $\left|X\right|\leq wL(X)^{\pi\chi(X)}$ holds. Finally, we will
show
that the  last formula is true for homogeneous spaces having a
regular $G_\delta$-diagonal.

\begin{proposition}
For any Hausdorff space $X$ we have
$$
|X|\leq 2^{d(X)s\Delta(X)}.
$$
\end{proposition}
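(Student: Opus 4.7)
The plan is to code each point $x\in X$ by the trace on a dense set of the open stars of $x$ in the covers witnessing the strong rank~$1$ diagonal. Set $\kappa=d(X)\cdot s\Delta(X)$ and fix once and for all a dense set $D\subseteq X$ with $|D|\leq\kappa$ and a family $\{\mathcal{U}_\alpha:\alpha<\kappa\}$ of open covers of $X$ such that for every pair $x\neq y$ in $X$ there is some $\alpha<\kappa$ with $y\notin\overline{\st{x}{\mathcal{U}_\alpha}}$. Define $\varphi:X\to \mathcal{P}(D)^\kappa$ by
$$
\varphi(x)(\alpha)=D\cap\st{x}{\mathcal{U}_\alpha}.
$$
Since the codomain has cardinality at most $(2^{|D|})^\kappa=2^\kappa$, the desired bound will follow as soon as $\varphi$ is shown to be injective.

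For the injectivity, suppose $x\neq y$ and choose $\alpha<\kappa$ so that $y\notin\overline{\st{x}{\mathcal{U}_\alpha}}$. The set
$$
W=\st{y}{\mathcal{U}_\alpha}\setminus\overline{\st{x}{\mathcal{U}_\alpha}}
$$
is open and contains $y$, hence is nonempty. By density of $D$ pick $d\in D\cap W$. Then $d\in D\cap\st{y}{\mathcal{U}_\alpha}=\varphi(y)(\alpha)$, while $d\notin\st{x}{\mathcal{U}_\alpha}$ and therefore $d\notin\varphi(x)(\alpha)$. This shows $\varphi(x)\neq\varphi(y)$, completing the embedding.

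The only delicate point is really the use of the closure in the strong rank~$1$ diagonal hypothesis: a plain rank~$1$ diagonal would only give $y\notin\st{x}{\mathcal{U}_\alpha}$, which need not yield an open set around $y$ disjoint from $\st{x}{\mathcal{U}_\alpha}$ into which we can drop a density witness. Strengthening to $y\notin\overline{\st{x}{\mathcal{U}_\alpha}}$ is exactly what allows density to separate $\varphi(x)$ from $\varphi(y)$. Apart from this observation, the argument is a routine density-coding of a $T_0$-type separation, and the Hausdorff hypothesis is in fact not strictly needed once $s\Delta(X)$ is defined.
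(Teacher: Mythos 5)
Your proof is correct and follows essentially the same route as the paper: the identical coding map $x\mapsto (D\cap\st{x}{\mathcal{U}_\alpha})_{\alpha<\kappa}$ into $\mathcal{P}(D)^\kappa$, with injectivity extracted from density of $D$ and the strong rank $1$ condition. The only cosmetic difference is that you exhibit an explicit witness $d\in D$ lying in $\varphi(y)(\alpha)\setminus\varphi(x)(\alpha)$, whereas the paper argues via $y\in\overline{F(y)(\alpha)}$ but $y\notin\overline{F(x)(\alpha)}$; these are interchangeable.
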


\begin{proof}
Let $\kappa = d(X) s\Delta(X)$ and fix a family
$\{\mathcal{U}_\alpha : \alpha < \kappa\}$ that witnesses the
fact that $X$ has a strong rank $1$ $G_\kappa$-diagonal. Let $D$
be a dense subset of $X$ of cardinality at most $\kappa$. We
define a map $F:X\to \mathcal{P}(D)^\kappa$ by
$$
F(x)(\alpha) = D\cap \Star{x}{\mathcal{U}_\alpha}.
$$
We only have to show that this map is one-to-one. First of all,
note that since $D$ is dense, we always have $x\in
\overline{F(x)(\alpha)}$. Now let $x\not= y$. Then we may find
$\alpha < \kappa$ with $y\not\in
\overline{\Star{x}{\mathcal{U}_\alpha}}$. But then, since
$F(x)(\alpha)\subseteq \Star{x}{\mathcal{U}_\alpha}$, it follows
that $y\not\in \overline{F(x)(\alpha)}$. So as $y\in
\overline{F(y)(\alpha)}$, it follows that $F(x)(\alpha)\not=
F(y)(\alpha)$.
\end{proof}

One could try  to conjecture the bound $2^{d(X)\Delta(X)}$, but
the Katetov extension of the
discrete space $\omega$ disproves it. It is  separable, it has a
$G_\delta$-diagonal and its cardinality is $2^\mathfrak{c}$.

Taking into account  a result of Ginsburg and Woods, see
\cite[Theorem 9.4]{hod84}, which states that if $X$ is a  $T_1$
space, then its cardinality
is bounded by $2^{e(X)\Delta(X)}$, it is quite natural to wonder
whether the previous proposition can be improved as follows: 
 
\begin{question}
Is the cardinality of a Hausdorff space $X$ bounded by
$2^{we(X)s\Delta(X)}$?
\end{question}

If, in the previous question, we replace
$s\Delta(X)$ with $\Delta_2(X)$, we can actually prove the
following stronger bound.

\begin{proposition}\thlabel{prop:we-rank2}
For any Hausdorff space $X$ we have
$$
|X|\leq we(X)^{\Delta_2(X)}.
$$
\end{proposition}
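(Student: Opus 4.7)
The plan is to build an injection from $X$ into a product $\prod_{\alpha<\kappa} A_\alpha$ where $\kappa=\Delta_2(X)$ and each $A_\alpha$ has size at most $\lambda:=we(X)$, which immediately yields $|X|\leq\lambda^\kappa$.

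First I would fix a sequence $\{\mathcal{U}_\alpha:\alpha<\kappa\}$ of open covers witnessing the rank~$2$ $G_\kappa$-diagonal, so that for every $x\neq y$ there is some $\alpha$ with $y\notin \st[2]{x}{\mathcal{U}_\alpha}$. Since $we(X)\leq\lambda$, I can then pick, for each $\alpha<\kappa$, a set $A_\alpha\subseteq X$ with $|A_\alpha|\leq\lambda$ and $\st{A_\alpha}{\mathcal{U}_\alpha}=X$. For any $x\in X$ this last equality says there exists at least one $a\in A_\alpha$ with $x\in\st{a}{\mathcal{U}_\alpha}$, equivalently (by symmetry of the star relation) $a\in A_\alpha\cap \st{x}{\mathcal{U}_\alpha}$. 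Using this, I define
\[
g\colon X\to\prod_{\alpha<\kappa} A_\alpha,\qquad g(x)(\alpha)\in A_\alpha\cap \st{x}{\mathcal{U}_\alpha}
\]
by choosing one such point for each $\alpha$.

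The key step is verifying injectivity. Suppose $g(x)=g(y)$ and let $a=g(x)(\alpha)=g(y)(\alpha)$ for a given $\alpha$. Then $a\in\st{x}{\mathcal{U}_\alpha}\cap\st{y}{\mathcal{U}_\alpha}$, so there exist $U,V\in\mathcal{U}_\alpha$ with $x,a\in U$ and $a,y\in V$; in particular $a\in U$ and $V\cap U\neq\emptyset$, which gives $y\in V\subseteq \st{U}{\mathcal{U}_\alpha}\subseteq\st[2]{x}{\mathcal{U}_\alpha}$. So $y\in\st[2]{x}{\mathcal{U}_\alpha}$ for every $\alpha<\kappa$, and the rank~$2$ $G_\kappa$-diagonal property forces $x=y$. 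Injectivity of $g$ yields $|X|\leq\prod_{\alpha<\kappa}|A_\alpha|\leq\lambda^\kappa=we(X)^{\Delta_2(X)}$, as required.

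I do not expect any real obstacle: the only subtlety is resisting the temptation to map $x$ to the whole set $A_\alpha\cap\st{x}{\mathcal{U}_\alpha}$, which would land $X$ in $\mathcal{P}(A)^\kappa$ and only deliver the weaker bound $2^{\lambda\cdot\kappa}$. Picking a single witness in each coordinate is what converts $2^\lambda$ factors into $\lambda$ factors, and the rank~$2$ (rather than merely rank~$1$) hypothesis is exactly what is needed to separate $x$ from $y$ after this collapse, since two different points may well select the same witness in $A_\alpha$.
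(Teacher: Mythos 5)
Your proof is correct and follows essentially the same route as the paper: choose, for each $\alpha<\Delta_2(X)$, a star-witnessing set $A_\alpha$ of size at most $we(X)$ and send $x$ to a choice of point of $A_\alpha\cap\st{x}{\sier{U}_\alpha}$, with injectivity coming from the rank~$2$ property (the paper phrases the separating condition as $\st{x}{\sier{U}_\alpha}\cap\st{y}{\sier{U}_\alpha}=\emptyset$, which is equivalent to $y\notin\st[2]{x}{\sier{U}_\alpha}$). The only cosmetic difference is that you map into $\prod_\alpha A_\alpha$ while the paper maps into $A^{\Delta_2(X)}$ with $A=\bigcup_\alpha A_\alpha$; both give the stated bound.
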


\begin{proof}
Let $\kappa=we(X)$ and $\lambda=\Delta_2(X)$. Fix a sequence of
open covers $\{\mathcal{U}_\alpha : \alpha < \lambda\}$
witnessing the fact that $X$ has a rank 2 $G_\lambda$-diagonal.
For every $\alpha<\lambda$, we may fix a subset $A_\alpha$ of $X$
with $|A_\alpha|\leq\kappa$ such that $X =
\Star{A_\alpha}{\mathcal{U_\alpha}}$. We let $A=\bigcup_{\alpha <
\lambda} A_\alpha$. Note that $|A|\leq\kappa\cdot\lambda$.

We may fix a map $f:X\to A^\lambda$ with the property that for
$x\in X$ and $\alpha<\lambda$ we have that $f(x)(\alpha) = a\in
A_\alpha$ and $x\in\Star{a}{\mathcal{U}_\alpha}$. To complete the
proof we will show that such a mapping is injective.

So fix $x\not= y$. Then we may find $\alpha < \lambda$ such that
$$
\Star{x}{\mathcal{U}_\alpha}\cap\Star{y}{\mathcal{U}_\alpha} =
\emptyset.
$$
Now let $p=f(x)(\alpha)$. Then
$x\in\Star{p}{\mathcal{U}_\alpha}$, and so also
$p\in\Star{x}{\mathcal{U}_\alpha}$. This means that
$p\not\in\Star{y}{\mathcal{U}_\alpha}$ and therefore
$y\not\in\Star{p}{\mathcal{U}_\alpha}$. This implies that
$p\not=f(y)(\alpha)$. So the mapping $f$ is injective and this
completes the proof.
\end{proof}

This result should be compared with  the inequality $|X|\le
we(X)^{psw(X)}$, obtained by R. Hodel (see \cite{bb06} 
for an alternative and direct proof; see also \cite{hodel91}). The Katetov extension of
$\omega$ witnesses that  in the last two formulas  it is not possible to
put  $\Delta(X)$ at the exponent.  However, one may still try to
conjecture  to improve Ginsburg-Woods' inequality by moving down
$e(X)$ from the exponent. This question was already published by
Bella in 1996 (see \cite{bella96}), but we think is worthy to
repeat it here.

\begin{question} 
Does the inequality
$$|X|\le e(X)^{\Delta(X)}$$ hold for any
$T_1$ space $X$?
\end{question}
  
In \cite[Theorem 2]{bella87}, Bella proved that the cardinality
of a Hausdorff space $X$ is bounded by $2^{c(X)\Delta_2(X)}$.
This was done by an application of the Erd\"os-Rado Theorem. For
Baire spaces with a rank 2-diagonal this bound can be
considerably improved. 

\begin{proposition} \thlabel{baire}
If $X$ a Baire space with a rank 2-diagonal then,
$$
|X|\leq wL(X)^{\omega}.
$$
\end{proposition}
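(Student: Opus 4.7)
Plan: Set $\kappa=wL(X)$ and fix open covers $\{\mathcal{U}_n:n<\omega\}$ of $X$ witnessing that $X$ has a rank $2$-diagonal. The overall strategy is to shrink each $\mathcal{U}_n$ using the weak Lindel\"of number, use the Baire property to produce a dense $G_\delta$ subset $G$ on which the shrunken families behave well, and then code every point of $X$ by a countable sequence of points of $G$.

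For each $n$ I would pick $\mathcal{V}_n\subseteq\mathcal{U}_n$ with $|\mathcal{V}_n|\leq\kappa$ and $W_n:=\bigcup\mathcal{V}_n$ dense in $X$. Since $X$ is Baire, the set $G:=\bigcap_{n<\omega}W_n$ is dense in $X$. For every $g\in G$ and every $n$, select $V_n(g)\in\mathcal{V}_n$ with $g\in V_n(g)$; this yields a map $G\to\prod_n\mathcal{V}_n$ whose range has cardinality at most $\kappa^\omega$, and it is injective. For if $g\neq g'$ in $G$, the rank $2$ hypothesis produces $n$ with $g'\notin\st[2]{g}{\mathcal{U}_n}$, equivalently $\st{g}{\mathcal{U}_n}\cap\st{g'}{\mathcal{U}_n}=\emptyset$; then $V_n(g)=V_n(g')$ would place this common member of $\mathcal{U}_n$ into both stars. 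Hence $|G|\leq\kappa^\omega$.

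Next I would extend this bound to all of $X$. For $x\in X$ and $n<\omega$ the star $\st{x}{\mathcal{U}_n}$ is a nonempty open set, so density of $G$ lets me choose $g_n(x)\in G\cap\st{x}{\mathcal{U}_n}$. The map $x\mapsto(g_n(x))_n$ sends $X$ into $G^\omega$, which has cardinality at most $(\kappa^\omega)^\omega=\kappa^\omega$. Injectivity uses the full rank $2$ hypothesis: if $x\neq y$ had $g_n(x)=g_n(y)$ for every $n$, pick $n$ with $\st{x}{\mathcal{U}_n}\cap\st{y}{\mathcal{U}_n}=\emptyset$ and observe that the common point lies in both stars, a contradiction. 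Therefore $|X|\leq\kappa^\omega$.

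I expect the main obstacle to be identifying cleanly the division of labour between the three hypotheses, rather than any single calculation. The weak Lindel\"of number shrinks each cover to size $\kappa$; the Baire property is precisely what guarantees that the countably many dense opens $W_n$ have dense intersection $G$ and hence supplies the pool of reference points $g_n(x)$; and rank $2$ (rather than just rank $1$) is exactly the separation strength needed at the final step to distinguish two arbitrary points of $X$ via a shared mediating point of $G$ sitting in the intersection of their single stars. Once these roles are in place, the injection $X\hookrightarrow G^\omega$ delivers the bound immediately.
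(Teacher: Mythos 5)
Your argument is correct and is essentially the paper's proof with its two ingredients inlined: the paper derives the bound by combining the proposition $|X|\leq we(X)^{\Delta_2(X)}$ with the inequality $we(X)\leq d(X)$ and a lemma stating that $d(X)\leq wL(X)^{\omega}$ for Baire spaces with a $G_\delta$-diagonal. Your bound $|G|\leq\kappa^{\omega}$ is precisely the proof of that lemma, and your injection $x\mapsto (g_n(x))_{n}$ is precisely the injection used to prove that proposition, with the dense set $G$ playing the role of the star-covering sets that witness $we(X)\leq d(X)$.
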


\begin{proof}
This follows from \ref{prop:we-rank2}, the fact that $we(X)\leq
d(X)$ and the following lemma.
\end{proof}

\begin{lemma}
If $X$ is a Baire space with a $G_\delta$-diagonal then,
$$
d(X) \leq wL(X)^{\omega}.
$$
\end{lemma}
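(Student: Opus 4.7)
The plan is to combine the $G_\delta$-diagonal, weak-Lindel\"of number, and Baire property as follows: use the diagonal to get a sequence of open covers that ``separates points,'' thin each cover down using $wL(X)$, take the intersection of their unions to obtain a dense $G_\delta$ via Baire-ness, and then use the diagonal condition again to inject this dense set into a product of size $wL(X)^{\omega}$.

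In detail, let $\kappa = wL(X)$. By Ceder's characterization cited in the introduction, fix a sequence $\{\mathcal{U}_n : n \in \omega\}$ of open covers of $X$ such that for any $x\not= y$ there is some $n\in\omega$ with $y\notin \Star{x}{\mathcal{U}_n}$. For each $n$, since $wL(X)\leq\kappa$, choose $\mathcal{V}_n\subseteq\mathcal{U}_n$ with $|\mathcal{V}_n|\leq\kappa$ and $\overline{\bigcup\mathcal{V}_n} = X$. Put $G_n = \bigcup\mathcal{V}_n$, which is open and dense in $X$. Because $X$ is Baire, the set $G = \bigcap_{n\in\omega} G_n$ is a dense subset of $X$, so it suffices to show $|G|\leq\kappa^\omega$.

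For every $x\in G$ and every $n\in\omega$ pick (using choice) some $V_n(x)\in\mathcal{V}_n$ with $x\in V_n(x)$, and define
$$
F : G \to \prod_{n\in\omega}\mathcal{V}_n, \qquad F(x) = (V_n(x))_{n\in\omega}.
$$
The codomain has cardinality at most $\kappa^\omega$, so the whole argument reduces to showing that $F$ is injective. Given distinct $x,y\in G$, the diagonal condition yields $n$ with $y\notin\Star{x}{\mathcal{U}_n}$; thus no member of $\mathcal{U}_n$ contains both $x$ and $y$. Since $V_n(x)\in\mathcal{V}_n\subseteq\mathcal{U}_n$ contains $x$, it does not contain $y$, hence $V_n(x)\not= V_n(y)$ and $F(x)\not= F(y)$.

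There is no real obstacle: the only delicate point is noticing that Baire-ness is used precisely to guarantee that the intersection $G$ of the dense open sets $G_n$ is itself dense (not just nonempty), so that $d(X)\leq |G|\leq\kappa^\omega = wL(X)^\omega$.
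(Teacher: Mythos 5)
Your argument is correct and follows essentially the same route as the paper: fix covers witnessing the $G_\delta$-diagonal, thin each to a subfamily of size $wL(X)$ with dense union, use Baire-ness to get a dense intersection $G$, and inject $G$ into $\prod_n\mathcal{V}_n$ by selecting for each point a member of $\mathcal{V}_n$ containing it. The only cosmetic difference is that you use an arbitrary choice function where the paper takes the minimum with respect to a fixed well-ordering of $\bigcup_n\mathcal{V}_n$; the injectivity argument is identical.
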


\begin{proof}
Let $wL(X)=\kappa$ and let $\{\mathcal{U}_n : n < \omega\}$ be a
sequence of open covers of $X$ witnessing the fact that $X$ has a
rank 1-diagonal. For every $n<\omega$, we fix a family
$\mathcal{V}_n\subseteq\mathcal{U}_n$ of cardinality $\kappa$
whose union is dense in $X$. Next we let $\mathcal{V} =
\bigcup_{n < \omega} \mathcal{V}_n$ and $D_n=
\bigcup\mathcal{V}_n$. Then $|\mathcal{V}|\leq \kappa$, and $D_n$
is an open and dense subset of $X$ for every $n$. Since $X$ is a
Baire space, this means that $D=\bigcap_{n<\omega} D_n$ is a
dense subset of $X$. So to complete the proof it suffices to show
that $|D|\leq \kappa^\omega$. 

We fix some well-ordering on $\mathcal{V}$ and we define a map
$f:D\to \mathcal{V}^\omega$ as follows
$$
f(d)(n) = \min\{ V\in\mathcal{V} : d\in V\in \mathcal{V}_n\}.
$$
We will show that $f$ is an injection. So fix $x,y\in D$ with
$x\not= y$. Then $y\not\in\Star{x}{\mathcal{U}_n}$ for some
$n\in\omega$. Let $V = f(x)(n)$.  Then $x\in V$ and since
$\mathcal{V}_n$ is a refinement of $\mathcal{U}_n$, this means
that $V\subseteq\Star{x}{\mathcal{U}_n}$. So we have that
$y\not\in V$ and therefore $f(x)(n)\not= f(y)(n)$. This completes
the proof.
\end{proof}

We could ask whether the Baire assumption in \ref{baire} is
necessary. This is an open question, but we can prove that for
spaces having a rank $3$-diagonal the following is true.

\begin{proposition}
If $X$ has a rank $3$-diagonal then,
$$
\left|X\right|\leq wL(X)^{\omega}.
$$
\end{proposition}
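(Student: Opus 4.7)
The plan is to imitate the Baire argument used in the preceding lemma, but to replace the dense $G_\delta$ set $D$ by the whole space $X$, at the cost of spending an extra ``star" level on each side of the comparison---exactly the slack that a rank $3$-diagonal provides.

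Fix a sequence $\{\mathcal{U}_n : n < \omega\}$ of open covers witnessing the rank $3$-diagonal and set $\kappa = wL(X)$. For each $n$ I would extract $\mathcal{V}_n \subseteq \mathcal{U}_n$ of cardinality at most $\kappa$ with $\bigcup \mathcal{V}_n$ dense in $X$, let $\mathcal{V} = \bigcup_n \mathcal{V}_n$ (so $|\mathcal{V}| \leq \kappa$), and fix some well-ordering of $\mathcal{V}$. For each $x \in X$ and each $n$, pick some $U_n(x) \in \mathcal{U}_n$ with $x \in U_n(x)$; since $U_n(x)$ is a nonempty open set it must meet $\bigcup \mathcal{V}_n$, so I can let $V_n(x)$ be the first $V \in \mathcal{V}_n$ with $V \cap U_n(x) \neq \emptyset$. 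Define $f : X \to \mathcal{V}^\omega$ by $f(x)(n) = V_n(x)$.

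Injectivity of $f$ will finish the argument, since it yields $|X| \leq |\mathcal{V}|^\omega \leq \kappa^\omega = wL(X)^\omega$. To prove injectivity, take $x \neq y$ and choose $n$ with $y \notin \st[3]{x}{\mathcal{U}_n}$. Since $x \in U_n(x)$ and $V_n(x)$ meets $U_n(x)$, the set $V_n(x)$, viewed as a member of $\mathcal{U}_n$, meets $\st{x}{\mathcal{U}_n}$, and hence $V_n(x) \subseteq \st[2]{x}{\mathcal{U}_n}$. If we had $V_n(y) = V_n(x)$, then $U_n(y) \in \mathcal{U}_n$ would meet $V_n(y) \subseteq \st[2]{x}{\mathcal{U}_n}$, forcing $U_n(y) \subseteq \st[3]{x}{\mathcal{U}_n}$, and in particular $y \in \st[3]{x}{\mathcal{U}_n}$---contradicting the choice of $n$.

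The only mildly delicate point is the bookkeeping on star degrees: one star level is consumed to pass from $x$ into $V_n(x)$ through $U_n(x)$, and another to pass from $V_n(y) = V_n(x)$ out to $y$ through $U_n(y)$, while the rank hypothesis separates $x$ and $y$ precisely at level $3$. This is exactly why rank $3$ (rather than rank $2$) is what is needed when Baireness is dropped, which ties in with the open question flagged immediately before the proposition.
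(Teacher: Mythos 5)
Your proof is correct and follows essentially the same approach as the paper: the paper defines $F(x)(n)$ as the first $V\in\mathcal{V}_n$ meeting $\st{x}{\mathcal{U}_n}$ and uses $F(x)(n)\subseteq\st[2]{x}{\mathcal{U}_n}$ together with $F(y)(n)\cap\st{y}{\mathcal{U}_n}\neq\emptyset$, which is exactly your star-counting argument with the single chosen set $U_n(x)$ replaced by the full star. The only cosmetic difference is that your coding function depends on an arbitrary choice of $U_n(x)\ni x$, which is harmless since only injectivity is needed.
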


\begin{proof}
Let $wL(X)=\kappa$ and let $\{\mathcal{U}_n : n<\omega \}$ be a
sequence of open covers of $X$ witnessing the fact that $X$ has a
rank $3$-diagonal. For every $n<\omega$, we fix a family
$\mathcal{V}_n\subseteq \mathcal{U}_n$ of cardinality $\kappa$
whose union is dense in $X$.

Next we let $\mathcal{V}=\bigcup_{n<\omega}\mathcal{V}_n$. Of
course we have $|\sier{V}|\leq wL(X)$. Note that whenever
$U\in\sier{U}_n$, there is some $V\in\sier{V}_n$ such that $U\cap
V\not=\emptyset$. So it follows that for every $x\in X$ and
$n\in\omega$, there is some $V\in\sier{V}_n$ such that
$\st{x}{\sier{U}_n}\cap V\not=\emptyset$. Also note that in this
case $V\subseteq \st[2]{x}{\sier{U}_n}$. We fix a well-ordering
on $\sier{V}$ and we define a map $F : X\to \sier{V}^\omega$ as
follows
$$
F(x)(n) = \min\{ V\in\mathcal{V} :
V\in\mathcal{V}_n~\&~\st{x}{\mathcal{U}_n}\cap V\not=\emptyset\}.
$$
We have just shown that $F$ is well-defined. It remains to show
that $F$ is an injection. So let $x,y\in X$ with $x\not= y$. By
assumption, there is some $n\in\omega$ such that
$$
\st[2]{x}{\mathcal{U}_n} \cap \st{y}{\mathcal{U}_n} = \emptyset.
$$
Since $F(x)(n)\subseteq\st[2]{x}{\sier{U}_n}$ and $F(y)(n)\cap
\st{y}{\sier{U}_n}\not=\emptyset$, it follows that $F(x)(n)\not=
F(y)(n)$. This shows that $F$ is an injection and this completes
the proof.
\end{proof}

The discrete cellularity of a space $X$ is the cardinal number
$dc(X)=\sup\{\left|\mathcal{U}\right|:\mathcal{U}$ is a discrete family of open
subsets of $X\}$.   
The last  result should be compared with the inequality  $|X|\le
2^{dc(X)\Delta_3(X)}$ proved in \cite{bella89}. Note that, at least for regular
spaces, we have $dc(X)\le wL(X)$ and the gap can be artitrarely large.  
We do not know if the last two mentioned inequalities are true for spaces with a
strong rank $2$-diagonal. 


\begin{question}
Let $X$ be a space with a strong rank 2-diagonal. Is it the case
that
\begin{itemize}
\item[$\bullet$] $|X|\leq wL(X)^\omega${?}
\item[$\bullet$] $|X|\le 2^{dc(X)}${?}
\end{itemize}
\end{question}

\noindent However, for  spaces of countable $\pi$-character,  we
have the
answer. 

\begin{proposition}
Let $X$ be a space with a strong rank 2-diagonal. Then
$$
|X|\leq wL(X)^{\pi\chi(X)}.
$$
\end{proposition}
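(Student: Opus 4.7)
The plan is to mimic the coding technique used in the two preceding propositions but with two adjustments: the covers $\mathcal{V}_n\subseteq \mathcal{U}_n$ will be extracted using $wL(X)$ rather than $d(X)$, and the selection of members of $\mathcal{V}_n$ associated with a point $x$ will be driven by a local $\pi$-base at $x$. Set $\kappa=wL(X)$ and $\mu=\pi\chi(X)$ and fix a sequence $\{\mathcal{U}_n:n<\omega\}$ of open covers witnessing the strong rank $2$-diagonal. For each $n$, using $wL(X)\le\kappa$, choose $\mathcal{V}_n\subseteq\mathcal{U}_n$ with $|\mathcal{V}_n|\le\kappa$ and $\bigcup\mathcal{V}_n$ dense in $X$, and set $\mathcal{V}=\bigcup_{n<\omega}\mathcal{V}_n$, so $|\mathcal{V}|\le\kappa$. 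For each $x\in X$ fix a $\pi$-base $\mathcal{B}_x$ at $x$ with $|\mathcal{B}_x|\le\mu$.

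For each triple $(x,n,B)$ with $B\in\mathcal{B}_x$, density of $\bigcup\mathcal{V}_n$ lets us pick some $V(x,n,B)\in\mathcal{V}_n$ with $V(x,n,B)\cap B\ne\emptyset$. I define
$$
F:X\to \bigl([\mathcal{V}]^{\le\mu}\bigr)^\omega, \qquad F(x)(n)=\{\,V(x,n,B):B\in\mathcal{B}_x\,\}.
$$
The cardinality count $\bigl|([\mathcal{V}]^{\le\mu})^\omega\bigr|\le(\kappa^\mu)^\omega=\kappa^\mu$ gives the desired bound once I establish injectivity of $F$.

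The key geometric observation, and the step that makes the $\pi$-character enter, is the following: every member of $F(x)(n)$ is contained in $\st[2]{x}{\mathcal{U}_n}$. Indeed, if $V\in\mathcal{U}_n$ meets some $B\in\mathcal{B}_x$, then every open neighborhood of $x$ contains some member of $\mathcal{B}_x$ which meets $V$; hence $x\in\overline{V}$. Since $\st{x}{\mathcal{U}_n}$ is an open set containing $x$, it meets $V$, and so $V\subseteq \st[2]{x}{\mathcal{U}_n}$ by definition of the $2$-star. The fact that $\pi$-base elements need not contain $x$, but only cluster at $x$, is exactly what makes this argument go through, and it is the main subtle point of the proof.

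With this in hand, injectivity follows immediately from the strong rank $2$-diagonal. Given $x\ne y$, pick $n$ with $y\notin\overline{\st[2]{x}{\mathcal{U}_n}}$ and an open neighborhood $W$ of $y$ disjoint from $\st[2]{x}{\mathcal{U}_n}$. Choose $B\in\mathcal{B}_y$ with $B\subseteq W$. Then $V(y,n,B)\cap W\supseteq V(y,n,B)\cap B\ne\emptyset$, so $V(y,n,B)\not\subseteq \st[2]{x}{\mathcal{U}_n}$, and the previous paragraph forces $V(y,n,B)\notin F(x)(n)$. Therefore $F(y)(n)\ne F(x)(n)$, which completes the proof. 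The only place one needs to be slightly careful is the cardinal arithmetic, which assumes $\mu$ is infinite; this is harmless since $\pi\chi$ is conventionally taken to be at least $\omega$.
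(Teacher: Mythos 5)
There is a genuine gap: your ``key geometric observation'' is false, and it fails at exactly the point where the paper's proof builds a safeguard into the definition of its coding map. From ``$V$ meets some $B\in\mathcal{B}_x$'' you infer that every neighbourhood of $x$ contains a member of $\mathcal{B}_x$ which meets $V$, hence $x\in\overline{V}$. But the $\pi$-base property only guarantees that every neighbourhood of $x$ contains \emph{some} member of $\mathcal{B}_x$; it says nothing about whether that member meets $V$. A member of a local $\pi$-base need not contain $x$, need not cluster at $x$, and may lie far from $x$: in $\mathbb{R}$ the family $\{(-1/n,1/n) : n\geq 1\}\cup\{(5,6)\}$ is a local $\pi$-base at $0$, and $V=(4,7)$ meets $(5,6)$ while $0\notin\overline{V}$. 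Consequently $V(x,n,B)$ need not meet $\st{x}{\mathcal{U}_n}$, so it need not be contained in $\st[2]{x}{\mathcal{U}_n}$, and the injectivity argument collapses: you cannot conclude $V(y,n,B)\notin F(x)(n)$.

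The repair is precisely the case distinction in the paper's definition of $F$: record a member of $\mathcal{V}_n$ meeting $B$ only when $B\subseteq\st{x}{\mathcal{U}_n}$, and assign a dummy value (the paper uses $\emptyset$) otherwise. Every recorded set then meets $\st{x}{\mathcal{U}_n}$ and, being a member of the cover $\mathcal{U}_n$ (the $\mathcal{V}_n$ are subfamilies of the $\mathcal{U}_n$, i.e.\ refinements), lies inside $\st[2]{x}{\mathcal{U}_n}$ --- this is where the containment you wanted actually comes from. Your endgame survives the modification: given $x\neq y$, choose $n$ with $y\notin\overline{\st[2]{x}{\mathcal{U}_n}}$ and apply the $\pi$-base property at $y$ to the open neighbourhood $\bigl(X\setminus\overline{\st[2]{x}{\mathcal{U}_n}}\bigr)\cap\st{y}{\mathcal{U}_n}$ of $y$; the resulting $B\in\mathcal{B}_y$ is actually recorded and its associated $V(y,n,B)$ escapes $\st[2]{x}{\mathcal{U}_n}$. (The paper instead concludes by showing $x\in\overline{W(x,n)}$ for the union $W(x,n)$ of the recorded sets and intersecting the closures $\overline{\st[2]{x}{\mathcal{U}_n}}$; the two finishes are essentially equivalent.) The cardinal arithmetic and the overall architecture of your argument are otherwise fine.
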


\begin{proof}
Let $\{\mathcal{U}_n : n<\omega\}$ be a sequence of open covers
of $X$ witnessing the fact that $X$ has a strong rank 2-diagonal
and let $\kappa = \pi\chi(X)$ and $\lambda=wL(X)$. For every
$x\in X$, we let $\sier{V}_x = \{ V(x,\alpha) : \alpha <
\kappa\}$ be a local $\pi$-base at $x$. For $n<\omega$, we fix a
family $\sier{W}_n\subseteq\sier{U}_n$ of cardinality $\lambda$
whose union is dense in $X$.

Next we let $\sier{W} = \bigcup_{n<\omega} \sier{W}_n$. Note that
$|\sier{W}|\leq \lambda$. Since $\sier{U}_n$ is a cover of $X$,
it follows that whenever $V$ is a non-empty open subset of $X$,
then $V\cap W\not=\emptyset$ for some $W\in\sier{W}_n$. We fix a
well-ordering on $\sier{W}$ and we define a map $F:X\to
\sier{W}^{\kappa\times\omega}$ as follows,
$$
F(x)(\alpha,n) = \left\{ \begin{array}{ll}
\emptyset, & \mbox{if}~V(x,\alpha)\not\subseteq
\st{x}{\sier{U}_n},\\
\min\{ W\in\sier{W}_n : W\cap V(x,\alpha)\not=\emptyset \}, &
\mbox{otherwise}.
\end{array} \right.
$$
By the remarks made before, the map $F$ is well-defined. For
$x\in X$ and $n<\omega$, we let $W(x,n)$ be defined by
$$
W(x,n) = \bigcup\{ F(x)(\alpha, n) : \alpha \in\kappa\}.
$$
Note that by definition of $F$, we have that $W(x,n)\subseteq
\st{\st{x}{\sier{U}_n}}{\sier{W}_n}$ and since $\sier{W}_n$ is a
refinement of $\sier{U}_n$, it follows that
$$
W(x,n) \subseteq \st[2]{x}{\sier{U}_n}.
$$
\begin{claim} $x\in\closure{W(x,n)}$ for every $n\in \omega$.
\end{claim} 
\begin{pfclaim}
To see this, let $Ox$ be an open neighbourhood of $x$. Then
$V(x,\alpha)\subseteq Ox\cap \st{x}{\sier{U}_n}$ for some $\alpha
< \kappa$. By definition of $F$, it follows that
$F(x)(\alpha,n)\cap V(x,\alpha)\not=\emptyset$ and therefore
$F(x)(\alpha,n)\cap Ox\not=\emptyset$. Since
$F(x)(\alpha,n)\subseteq W(x,n)$, it follows that $x\in
\closure{W(x,n)}$ and this proves the claim.
\end{pfclaim}

So for every $x\in X$, we have that
$$
\{x\}\subseteq \bigcap_{n<\omega} \closure{W(x,n)} \subseteq
\bigcap_{n<\omega} \closure{\st[2]{x}{\sier{U}_n}} = \{x\}.
$$
This shows that $F$ is an injection and this completes the proof.
\end{proof}

For homogeneous spaces, the previous proposition can be improved.

Note that if $X$ is homogeneous and $\pichar{X} = \kappa$, then
there is a collection $\{ V(x,\alpha) : x\in X, \alpha<\kappa\}$
of non-empty open subsets of $X$ such that for every $x\in X$,
$\sier{V}_x = \{ V(x,\alpha) : \alpha<\kappa\}$ is a local
$\pi$-base at $x$ and whenever $Ox$ and $Oy$ are open
neighbourhoods of $x$ and $y$ respectively, there is some
$\alpha<\kappa$ such that
$$
V(x,\alpha)\subseteq Ox~\mbox{and}~V(y,\alpha)\subseteq Oy.
$$
For example, if $p\in X$ is fixed and $\{ V_\alpha : \alpha
<\kappa\}$ is a local $\pi$-base at $p$ in $X$, then we may
define $V(x,\alpha) = h_{x}[V_\alpha]$, where $h_x$ is a
homeomorphism of $X$ mapping $p$ onto $x$. 

\begin{proposition}
Let $X$ be a homogeneous space with a regular
$G_\delta$-diagonal. Then
$$
|X|\leq wL(X)^{\pichar{X}}.
$$
\end{proposition}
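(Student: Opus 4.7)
The plan is to adapt the previous proposition's proof with two key adjustments: replace the strong rank $2$-diagonal hypothesis by Zenor's sequential characterization of regular $G_\delta$-diagonals recalled in Section~2, and exploit the two-sided coherence of the homogeneous $\pi$-base $\{V(x,\alpha)\}$ described in the remark preceding the statement. This coherence is the essential new ingredient, since it allows a single index $\alpha$ to shrink a $\pi$-base element at $x$ inside one prescribed open set and, simultaneously, a $\pi$-base element at $y$ inside another.

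First I would set $\lambda = wL(X)$ and $\kappa = \pichar{X}$, and invoke Zenor's theorem to fix a sequence $\{\sier{U}_n : n < \omega\}$ of open covers of $X$ such that for every $x \neq y$ there exist an open neighbourhood $U$ of $x$ and some $n < \omega$ with $y \notin \closure{\st{U}{\sier{U}_n}}$. Exactly as in the preceding proposition, for each $n$ I pick a subfamily $\sier{W}_n \subseteq \sier{U}_n$ with $|\sier{W}_n| \leq \lambda$ whose union is dense in $X$, and set $\sier{W} = \bigcup_{n<\omega} \sier{W}_n$, so that $|\sier{W}| \leq \lambda$. I also fix the homogeneous system $\{V(x,\alpha) : x \in X, \alpha < \kappa\}$ from the remark.

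After well-ordering $\sier{W}$, I would define $F : X \to \sier{W}^{\kappa \times \omega}$ by
$$
F(x)(\alpha, n) = \min\{W \in \sier{W}_n : W \cap V(x,\alpha) \neq \emptyset\},
$$
which is well-defined because $\bigcup \sier{W}_n$ is dense in $X$ and each $V(x,\alpha)$ is nonempty and open. To prove injectivity, fix $x \neq y$ and apply Zenor to obtain an open neighbourhood $U$ of $x$ and some $n$ with $y \notin \closure{\st{U}{\sier{U}_n}}$; pick an open neighbourhood $Oy$ of $y$ disjoint from $\st{U}{\sier{U}_n}$, and use coherence to find $\alpha < \kappa$ with $V(x,\alpha) \subseteq U$ and $V(y,\alpha) \subseteq Oy$. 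Setting $W = F(x)(\alpha, n)$, the relation $W \cap V(x,\alpha) \neq \emptyset$ together with $W \in \sier{U}_n$ forces $W \subseteq \st{U}{\sier{U}_n}$, hence $W \cap V(y,\alpha) \subseteq W \cap Oy = \emptyset$. Therefore $F(y)(\alpha, n) \neq W$, so $F(x) \neq F(y)$. Injectivity gives $|X| \leq |\sier{W}|^{|\kappa \times \omega|} \leq \lambda^{\kappa} = wL(X)^{\pichar{X}}$.

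I do not see a serious obstacle: the proof reuses the apparatus of the previous proposition, with Zenor's theorem providing exactly the kind of neighbourhood $U$ that can be paired, via the homogeneous coherence, against the witness $Oy$ that separates $y$ from $\closure{\st{U}{\sier{U}_n}}$. The only mild subtlety is to resist the temptation to model $F$ on a second-star neighbourhood of $x$ (as was done previously); here it is cleaner to let $F$ record the first-star information at every $\pi$-base element and rely on the coherence to choose the single index that distinguishes $x$ from $y$.
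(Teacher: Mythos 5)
Your proof is correct and follows essentially the same route as the paper's: the same map $F$ recording the least member of $\sier{W}_n$ meeting $V(x,\alpha)$, with injectivity obtained by combining Zenor's characterization with the coherence of the homogeneous $\pi$-base exactly as the paper does. The only differences are cosmetic (the order of the indices and your explicit extraction of $Oy$ as the complement of $\closure{\st{U}{\sier{U}_n}}$, which the paper folds into one sentence).
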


\begin{proof}
Fix a sequence $\{\sier{U}_n : n<\omega\}$ of open covers of $X$
witnessing the fact that $X$ has a regular $G_\delta$-diagonal.
Furthermore, let $\pichar{X}=\kappa$ and $wL(X)=\lambda$ and fix
a collection $\{V(x,\alpha) : x\in X, \alpha<\kappa\}$ of
non-empty open subsets of $X$ with the property stated just
before this proposition. 

Next, for $n<\omega$, we fix a family
$\sier{W}_n\subseteq\sier{U}_n $ of cardinality $\lambda$ whose
union is dense in $X$.

Note that since $\sier{U}_n$ is a cover of $X$, if follows that
whenever $V$ is a non-empty open subset of $X$, then $V\cap
W\not=\emptyset$ for some $W\in\sier{W}_n$. We let $\sier{W} =
\bigcup_{n<\omega} \sier{W}_n$ and we fix a well-ordering on
$\sier{W}$. Note that $|\sier{W}|\leq wL(X)$.

We now define a map $F:X\to \sier{W}^{\omega\times\kappa}$ as
follows,
$$
F(x)(n,\alpha) = \min\{ W\in\sier{W} : W\in\sier{W}_n~\&~W\cap
V(x,\alpha)\not=\emptyset\}.
$$
We have just showed that $F$ is well-defined. It remains to
verify that $F$ is an injection, so let $x,y\in X$ with $x\not=
y$. Then there is some $n<\omega$ and open neighbourhoods $Ox$
and $Oy$ of $x$ and $y$ respectively such that
$$
\st{Ox}{\sier{U}_n} \cap Oy =\emptyset.
$$
By the property of our local $\pi$-bases, it follows that there
is some $\alpha<\kappa$ such that
$$
V(x,\alpha)\subseteq Ox~\mbox{and}~V(y,\alpha)\subseteq Oy.
$$
Now recall that $\sier{W}_n$ is a refinement of $\sier{U}_n$, and
therefore, since $V(x,\alpha)\subseteq Ox$,  we have the
following:
$$
F(x)(n,\alpha)\subseteq \st{Ox}{\sier{U}_n}.
$$
Furthermore, by construction we have that $F(y)(n,\alpha)\cap
Oy\not=\emptyset$ so it follows that $F(x)(n,\alpha)\not=
F(y)(n,\alpha)$. This shows that $F$ is an injection and this
completes the proof.
\end{proof}

\bibliography{newreferences}
\bibliographystyle{amsplain}

\end{document}